   \newtheorem{theorem}{Theorem}[section]
   \newtheorem{proposition}[theorem]{Proposition}
   \newtheorem{lemma}[theorem]{Lemma}
   \newtheorem{corollary}[theorem]{Corollary}
   \newtheorem{question}[theorem]{Question}
\numberwithin{equation}{section}
\newcommand{\todo}[1]{\vspace{5 mm}\par \noindent
\marginpar{\textsc{ToDo}} \framebox{\begin{minipage}[c]{0.95
\textwidth}
 #1 \end{minipage}}\vspace{5 mm}\par}
\newcommand{\la}{\lambda}
\newlength{\mysizetiny}
\newlength{\mysizesmall}
\newlength{\mysize}
\newlength{\mysizelarge}
\dedicatory{Dedicated to the memory of Avi Glick}
\title{A note on polynomials for character degrees}
\author{Yuval Roichman}
\address{Department of Mathematics, Bar-Ilan University, Ramat-Gan 52900, Israel}
\email{yuvalr@math.biu.ac.il}
\date{Sep 9, 2025}
\begin{document}

\maketitle

\begin{abstract}
A recent result of Cohen and Zemel provides an elegant expansion of 
 the Rasala polynomials for symmetric group character degrees.
In this note we present an alternative short algebraic proof. Extensions to polynomials of character values follow.  

\end{abstract}

\section{Introduction}

Fix a partition $\la=(\la_1,\dots,\la_t)\vdash k$ and let $n\ge \la_1$. Denote $(n,\la):=(n,\la_1,\dots,\la_t)\vdash n+k$. 
A classical theorem of Rasala~\cite{Rasala} expands the degree of $\chi^{(n,\la)}$ as a polynomial in $n$. The following elegant expansion was recently proved by Cohen and Zemel. 

Denote the degrees of the symmetric group characters 
$f^\la:
=\chi^{\la}(id)$ and  
$f^{\la/\mu}:=\chi^{\la/\mu}(id)$. 


\begin{theorem}\label{thm:CZ}~\cite{CZ} 
  For every fixed partition 
$\lambda=(\la_1,\dots,\la_t)\vdash k$ and 
 $n\ge \la_1$ the following holds. 
\[
f^{(n,\la)}=\sum\limits_{j=0}^t (-1)^j \binom{n+k}{k-j} f^{\la/(1^j)}.
\]   
\end{theorem}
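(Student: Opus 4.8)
The plan is to lift the numerical identity to an identity of symmetric functions and then recover the theorem by applying a ``dimension'' functional. Write $s_\mu$ for the Schur function and $s_{\la/\mu}$ for the skew Schur function. The starting point is the standard fact that degrees are read off by pairing against powers of $p_1$: since $p_1^m=\sum_{\mu\vdash m}f^\mu s_\mu$ and the Schur functions are orthonormal, one has $f^\mu=\langle s_\mu,p_1^{|\mu|}\rangle$ and, more generally, $f^{\la/\mu}=\langle s_{\la/\mu},p_1^{|\la/\mu|}\rangle$. The key that makes the whole thing work is the Schur-function identity
\[
s_{(n,\la)}=\sum_{j=0}^{t}(-1)^j\,h_{n+j}\,s_{\la/(1^j)},
\]
which is homogeneous of degree $n+k$ on both sides. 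First I would establish this identity, and then extract the theorem from it.

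For the Schur-function identity I would argue directly from the Jacobi--Trudi determinant. Since $n\ge\la_1$, the sequence $\nu=(n,\la_1,\dots,\la_t)$ is a genuine partition with $t+1$ parts, so
\[
s_{(n,\la)}=\det\bigl(h_{\nu_a-a+b}\bigr)_{a,b=1}^{t+1}.
\]
Its first row is $(h_n,h_{n+1},\dots,h_{n+t})$. Expanding the determinant along that first row gives $\sum_{b=1}^{t+1}(-1)^{1+b}h_{n+b-1}\,\widehat M_{1b}$, and the crux is to identify each minor $\widehat M_{1,\,j+1}$ (rows indexed by $\la$, the $(j{+}1)$-st column deleted) with the skew Jacobi--Trudi determinant for $s_{\la/(1^j)}$. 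A direct comparison of indices shows the entries agree in the two ranges of columns $b\le j$ and $b>j$, while the cofactor sign $(-1)^{1+b}$ becomes $(-1)^j$ under $j=b-1$. The sum truncates at $j=t$ because $s_{\la/(1^j)}=0$ as soon as $(1^j)\not\subseteq\la$.

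Finally I would apply a dimension functional to the identity. The cleanest route is the exponential specialization $\mathrm{ex}$, the algebra homomorphism determined by $\mathrm{ex}(h_r)=t^r/r!$, under which $\mathrm{ex}(s_{\la/(1^j)})=f^{\la/(1^j)}\,t^{k-j}/(k-j)!$ and $\mathrm{ex}(s_{(n,\la)})=f^{(n,\la)}\,t^{n+k}/(n+k)!$. Because $\mathrm{ex}$ is multiplicative, applying it and comparing the coefficient of $t^{n+k}$ yields $f^{(n,\la)}/(n+k)!=\sum_{j}(-1)^j f^{\la/(1^j)}/\bigl((n+j)!\,(k-j)!\bigr)$; multiplying by $(n+k)!$ converts the factor into $\binom{n+k}{n+j}=\binom{n+k}{k-j}$, which is exactly the claim. (Equivalently, pairing the identity with $p_1^{n+k}$ and using $\Delta(p_1^{n+k})=\sum_{a+b=n+k}\binom{n+k}{a}p_1^a\otimes p_1^b$ together with $\langle h_{n+j},p_1^{n+j}\rangle=1$ produces the binomial directly.) I expect the only genuine work to be the determinant step --- verifying the minor-to-skew-shape identification with the correct signs and truncation; once that is in place, the passage back to degrees is routine bookkeeping.
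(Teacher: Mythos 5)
Your proposal is correct and takes essentially the same approach as the paper: the core of the paper's argument is exactly your Schur-function identity $s_{(n,\la)}=\sum_{j=0}^t(-1)^j h_{n+j}\,s_{\la/(1^j)}$, proved in the same way, by expanding the Jacobi--Trudi determinant for $s_{(n,\la)}$ along its first row and identifying the minors with the skew Jacobi--Trudi determinants for $s_{\la/(1^j)}$. The only (cosmetic) divergence is in the final bookkeeping step: where you extract degrees via the exponential specialization (equivalently, pairing with $p_1^{n+k}$), the paper applies the Frobenius characteristic map to turn the identity into one of induced characters and evaluates at the identity using $\left(\chi^{\nu}\otimes\chi^{\tau}\right)\uparrow_{S_m\times S_{n-m}}^{S_n}(id)=\binom{n}{m}f^{\nu}f^{\tau}$; the two devices are interchangeable and yield the binomial coefficient $\binom{n+k}{k-j}$ identically.
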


The proof of Cohen and Zemel is combinatorial, using enumeration of standard Young tableaux, and sophisticated. A  symmetric function identity which implies Theorem~\ref{thm:CZ} and  
generalizations 
is presented in this paper. 

\section{A symmetric functions identity and Cohen-Zemel formula}

Recall the (skew) Schur function $s_\la$ ($s_{\la/\mu}$) indexed by a  (skew) partition $\la$ ($\la/\mu$).  

\begin{theorem}\label{thm:main}
For every fixed partition 
$\lambda=(\la_1,\dots,\la_t)\vdash k$ and 
 $n\ge \la_1$ the following holds. 
\begin{equation}\label{eq:main}
s_{(n,\la)}=\sum\limits_{j=0}^t (-1)^j s_{n+j}s_{\la/(1^j)}.  
\end{equation}
\end{theorem}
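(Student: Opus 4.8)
The plan is to read \eqref{eq:main} off the Jacobi--Trudi determinant by a single cofactor expansion along its top row. Set $\mu_0=n$ and $\mu_i=\la_i$ for $1\le i\le t$, so that $(n,\la)$ is the partition whose parts are indexed by $0,1,\dots,t$; here the hypothesis $n\ge\la_1$ is precisely what guarantees this is a genuine partition, so the classical Jacobi--Trudi identity applies and gives
\[
s_{(n,\la)}=\det\bigl(h_{\mu_i-i+c}\bigr)_{0\le i,c\le t},
\]
where $h_m$ is the complete homogeneous symmetric function and $h_m=s_m$ for the single row of length $m$.

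First I would expand this $(t+1)\times(t+1)$ determinant along the row $i=0$, whose entries are $h_{n+c}=s_{n+c}$ for $c=0,\dots,t$. With $0$-based indexing the cofactor sign at position $(0,c)$ is $(-1)^{c}$, yielding
\[
s_{(n,\la)}=\sum_{c=0}^t(-1)^c\, s_{n+c}\, M_c,
\]
where $M_c$ is the minor obtained by deleting row $0$ and column $c$. This already has the exact shape of \eqref{eq:main}, so everything reduces to identifying $M_c$ with $s_{\la/(1^c)}$. To do so I would recognize each $M_c$ as a skew Jacobi--Trudi determinant: after deleting row $0$ the surviving rows are $i=1,\dots,t$ with $\mu_i=\la_i$, and after deleting column $c$ the surviving columns $\{0,\dots,t\}\setminus\{c\}$ are re-indexed increasingly by $p=1,\dots,t$. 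For $p\le c$ the original column is $p-1$, giving entry $h_{\la_i-i+(p-1)}$, while for $p>c$ the original column is $p$, giving $h_{\la_i-i+p}$. Comparing with $s_{\la/\nu}=\det(h_{\la_i-\nu_p-i+p})$ for $\nu=(1^c)$ (so $\nu_p=1$ for $p\le c$ and $\nu_p=0$ otherwise) shows the two matrices agree entry by entry, whence $M_c=s_{\la/(1^c)}$. Since $\la$ has positive parts $\la_1,\dots,\la_t$, the containment $(1^c)\subseteq\la$ holds for every $0\le c\le t$, so each skew shape is legitimate.

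I expect the only real care to lie in this last step: keeping the $0$-based cofactor signs straight and matching the shifted column indices after deletion against the $-\nu_p$ shifts coming from $\nu=(1^c)$. Once that alignment is verified the identity is immediate, and Theorem~\ref{thm:CZ} then drops out by applying the ring homomorphism $\Lambda\to\QQ$ sending $p_1\mapsto 1$ and $p_r\mapsto 0$ for $r\ge 2$, under which $s_\rho\mapsto f^\rho/|\rho|!$ for any (skew) shape $\rho$: the two sides of \eqref{eq:main} become $f^{(n,\la)}/(n+k)!$ and $\sum_{j}(-1)^j f^{\la/(1^j)}/\bigl((n+j)!\,(k-j)!\bigr)$, which after multiplying by $(n+k)!$ and using $\binom{n+k}{k-j}=\tfrac{(n+k)!}{(n+j)!\,(k-j)!}$ is exactly the Cohen--Zemel formula.
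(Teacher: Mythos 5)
Your proof is correct and takes essentially the same route as the paper: expand the Jacobi--Trudi determinant for $s_{(n,\la)}$ along its first row and identify each cofactor minor with the skew Jacobi--Trudi determinant giving $s_{\la/(1^j)}$, with your column re-indexing merely making explicit a step the paper asserts without detail. (Your closing derivation of Theorem~\ref{thm:CZ} via the specialization $s_\rho\mapsto f^\rho/|\rho|!$ is a minor variant of the paper's Frobenius-characteristic evaluation at the identity.)
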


\begin{proof}
Let $\la/\mu$ be a skew shape and $\ell(\la)$ be the number of parts of $\la$.    
Recall the Jacobi-Trudi determinantal formula for skew Schur functions~\cite[Theorem 7.16.1]{EC2} 
\[
s_{\la/\mu} = \det H^{\la/\mu}
\]
where $H^{\la/\mu}$ is an $\ell(\la)\times \ell(\la)$  matrix whose entries are
\[
H^{\la/\mu}_{i,j}:= h_{\la_i-i-\mu_j+j}. 
\]
Expanding the determinant of $H^{(n,\la)}$ 
along the first row, one obtains 
\[
s_{(n,\la)}= \det H^{(n,\la)}=\sum\limits_{j=0}^t (-1)^j h_{n+j} M^{\la}_{1,j}=\sum\limits_{j=0}^t (-1)^j s_{n+j} M^{\la}_{1,j},   
\]
where $M^{\la}_{1,j}$ is the $(1,j)$-th minor of $H^{\la}$.

To complete the proof notice that, by the Jacobi-Trudi formula for skew shapes, 
\[
s_{\la/(1^j)} = M^{\la}_{1,j}.
\]
\end{proof}


\begin{proof}[Proof of Theorem~\ref{thm:CZ}.] 
Recall the Frobenius characteristic map~\cite[Ch. 7.18]{EC2}. 
Equation~\eqref{eq:main} is equivalent, via 
the Frobenius characteristic map, to  the following.
\begin{equation}\label{eq:ch}
\chi^{(n,\la)}=\sum\limits_{j=0}^t (-1)^j \left(\chi^{n+j}\otimes \chi^{\la/(1^j)}\right)\uparrow_{S_{n+j}\times S_{k-j}}^{S_n}. 
\end{equation}
Evaluating the characters at the identity and recalling that for every pair of skew shapes $\nu\vdash m$ and $\tau\vdash n-m$
\[
\left(\chi^{\nu}\otimes \chi^{\tau}\right)\uparrow_{S_{m}\times S_{n-m}}^{S_n}(id)=\binom{n}{m}f^\nu f^\tau, 
\]
complete the proof.  
\end{proof}


\section{Polynomiality of character values}


Recall the notation  $\alpha=(1^{a_1},2^{a_2},\dots,n^{a_n})\vdash n$ for a partition of $n$, with  $a_i$ parts of size $i$, for every $1\le i\le n$. Denote the evaluation of an $S_n$-character $\chi$ at a conjugacy class of cycle type $\alpha$ by  $\chi(\alpha)=\chi(1^{a_1},2^{a_2},\dots,n^{a_n})$; this notation is  sometimes abbreviated to $\chi_\alpha:=\chi(\alpha)$.  
The following lemma is an immediate consequence of the Murnaghan-Nakayma rule. For self-containment we give an independent basic proof. 

\begin{lemma}\label{prop:ind} 
For every pair 
of an 
$S_m$-character $\psi$  and an $S_{n-m}$-character   
$\phi$ 
\[
\left(\psi\otimes \phi\right)\uparrow_{S_{m}\times S_{n-m}}^{S_n}(1^{a_1},2^{a_2},\dots,n^{a_n})=
\sum\limits_{(1^{b_1},\dots,n^{b_n})\vdash m\atop \forall i\ 0\le b_i\le a_i}
\prod\limits_{i=1}^n \binom{a_i}{b_i}  
\psi{(1^{b_1},\dots,n^{b_n})} \phi {(1^{a_1-b_1},\dots,n^{a_n-b_n})}. 
\]
\end{lemma}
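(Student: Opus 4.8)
The plan is to compute the induced character directly from the Frobenius formula
\[
\left(\psi\otimes\phi\right)\uparrow_{S_m\times S_{n-m}}^{S_n}(g)=\frac{1}{m!\,(n-m)!}\sum_{\substack{x\in S_n\\ x^{-1}gx\in S_m\times S_{n-m}}}\left(\psi\otimes\phi\right)(x^{-1}gx),
\]
where $g$ is any fixed element of cycle type $\alpha=(1^{a_1},\dots,n^{a_n})$. I regard $H:=S_m\times S_{n-m}$ as the stabilizer of the ordered set partition $\{1,\dots,m\}\sqcup\{m+1,\dots,n\}$, so that the condition $x^{-1}gx\in H$ is equivalent to requiring that $g$ preserve the subset $S:=x(\{1,\dots,m\})$; equivalently, $S$ is a union of cycles of $g$ with $|S|=m$.

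First I would reorganize the sum according to the subset $S$. For a fixed $S$ that is a union of $g$-cycles of size $m$, the elements $x$ with $x(\{1,\dots,m\})=S$ are exactly the bijections sending $\{1,\dots,m\}$ onto $S$ and $\{m+1,\dots,n\}$ onto its complement $S^c$, and there are precisely $m!\,(n-m)!=|H|$ of them. This cancels the normalizing factor. Moreover, since $\psi\otimes\phi$ is a class function on $H$, the value $\left(\psi\otimes\phi\right)(x^{-1}gx)$ depends only on the cycle types of $g$ restricted to $S$ and to $S^c$: it equals $\psi(\beta)\,\phi(\gamma)$, where $\beta$ and $\gamma$ are the cycle types of $g|_S$ and $g|_{S^c}$. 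Hence the induced value collapses to $\sum_S \psi(\beta)\,\phi(\gamma)$, summed over all $S$ that are unions of $g$-cycles with $|S|=m$.

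It then remains to count the subsets $S$ by the cycle type $\beta=(1^{b_1},\dots,n^{b_n})\vdash m$ that they produce. Since $S$ must be a union of whole cycles of $g$, specifying $\beta$ amounts to choosing, for each cycle length $i$, exactly $b_i$ of the $a_i$ cycles of length $i$ to include in $S$; this can be done in $\prod_{i=1}^n\binom{a_i}{b_i}$ ways, it forces $0\le b_i\le a_i$, and it determines the complementary type $\gamma=(1^{a_1-b_1},\dots,n^{a_n-b_n})$. Grouping the terms of the previous sum by $\beta$ yields exactly the claimed identity.

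The main obstacle is the bookkeeping in the second paragraph: one must verify carefully that each admissible subset $S$ is hit by exactly $|H|$ group elements $x$, so that the factor $1/|H|$ disappears cleanly, and that conjugation by $x$ transports the restricted cycle types correctly, so that the class-function reduction $\left(\psi\otimes\phi\right)(x^{-1}gx)=\psi(\beta)\,\phi(\gamma)$ is justified. Once this cancellation is pinned down, the remaining counting is routine.
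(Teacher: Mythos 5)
Your proof is correct, but it organizes the computation differently from the paper. Both arguments start from the same Frobenius formula for induced characters, but the paper then passes to a sum over representatives $r$ of the $H$-conjugacy classes that meet the $S_n$-class of $g$, each weighted by the ratio $|\mathrm{Cent}_{S_n}(g)|/|\mathrm{Cent}_H(r)|$, and evaluates that ratio using the explicit centralizer order $\prod_i a_i!\,i^{a_i}$, which collapses to $\prod_i \binom{a_i}{b_i}$. You instead reindex the Frobenius sum by the $g$-invariant $m$-subsets $S$ --- in effect by the fixed points of $g$ acting on the coset space $S_n/(S_m\times S_{n-m})$, identified with $m$-subsets of $\{1,\dots,n\}$ --- observe that each admissible $S$ is hit by exactly $|H|=m!\,(n-m)!$ elements $x$ (cancelling the normalizing factor), and obtain the product of binomials directly as the number of ways to assemble $S$ from $b_i$ of the $a_i$ cycles of each length $i$. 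Your route is somewhat more self-contained: it never invokes the centralizer-order formula for symmetric groups (which the paper's ratio computation quietly relies on), and it gives the multiplicity $\prod_i\binom{a_i}{b_i}$ a transparent combinatorial meaning as a count of cycle selections. The paper's route is marginally shorter if one takes the formula $|\mathrm{Cent}_{S_n}(\alpha)|=\prod_i a_i!\,i^{a_i}$ as known, and it follows the standard textbook recipe for evaluating induced characters class by class. The bookkeeping you flag at the end --- that the fibre over each $S$ has size exactly $|H|$, and that conjugation by $x$ preserves the restricted cycle types --- is carried out correctly in your second paragraph, so there is no gap.
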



\begin{proof}
The following formula for induced characters is well known, see e.g.~~\cite[Ch. 3.3 Thm. 12]{Serre}. Let $H$ be  
subgroup of a finite group  $G$, and let 
$\chi$ be an $H$-character. Then  
\[
\chi\uparrow_H^G(x)=\frac{1}{|H|}
\sum\limits_{g\in G\atop gxg^{-1}\in H}\chi(gxg^{-1})=
\frac{|{\text{Cent}}_G(x)|}{|H|}\cdot \sum\limits_{gxg^{-1}\in H}
\chi(gxg^{-1}) 
\]
To verify the RHS observe that the number of elements in $G$ that conjugate $x$ to any fixed $gxg^{-1}$ is equal to the size of the centralizer ${\text{Cent}}_G(x)$. Let $R_x\subset H$ be a set of representatives of conjugacy classes in $H$ which are conjugate to $x$ in $G$.  Then 
\[
\chi\uparrow_H^G(x)= \frac{|{\text{Cent}}_G(x)|}{|H|}\cdot \sum\limits_{r\in R_x} \frac{|H|}{|{\text{Cent}}_H(r)|} 
\chi(r) =
 \sum\limits_{r\in R_x}
\frac{|{\text{Cent}}_G(x)|}{|{\text{Cent}}_H(r)|}
\chi(r).
\]
For a permutation $\pi\in S_n$ of cycle type $\alpha=(1^{a_1},\dots,n^{a_n})\vdash n$, 
all conjugates of $\pi$ in $S_m\times S_{n-m}$ 
are products 
$\pi_1\times \pi_2$ in the Young subgroup $S_m\times S_{n-m}$, such that 
$\pi_1$ has cycle type  $(1^{b_1},\dots,n^{b_n})\vdash m$ and $\pi_2$ has cycle type $(1^{a_1-b_1},\dots,n^{a_n-b_n})\vdash n-m$,  for some $0\le b_i\le a_i$  $(\forall i)$. Then    
\[
\frac{|{\text{Cent}}_{S_n}(1^{a_1},\dots,n^{a_n})|}{|{\text{Cent}}_{S_m\times S_{n-m}}\left((1^{b_1},\dots,n^{b_n})\times  
(1^{a_1-b_1},\dots,n^{a_n-b_n})\right)|}
\
=\frac{\prod\limits_{i=1}^n a_i! i^{a_i}}{
\prod\limits_{i=1}^n b_i! i^{b_i}\times \prod\limits_{i=1}^n (a_i-b_i)! i^{a_i-b_i}}=\prod\limits_{i=1}^n \binom{a_j}{b_j}.
\]
\end{proof}


Theorem~\ref{thm:CZ} may be generalized to all conjugacy classes of cycle type $r^s$ as follows. 

\begin{proposition}\label{cor:r} 
For every 
fixed partition $\la=(\la_1,\dots,\la_t)\vdash k$, $n\ge \la_1$ and 
 $r|(n+k)$ the following holds. 
 \begin{equation}\label{eq:r}
\chi^{(n,\la)}_{r^{\frac{n+k}{r}}}
=\sum\limits_{j=0}^t (-1)^j \binom{\frac{n+k}{r}}{\frac{k-j}{r}} \chi^{\la/(1^j)}_{r^{\frac{k-j}{r}}},  
\end{equation}
where the sum runs over $j$ with $r|(k-j)$. 
In particular, $\chi^{(n,\la)}_{r^{\frac{n+k}{r}}}$ is a polynomial in $n$.
\end{proposition}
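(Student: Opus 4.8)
The plan is to derive Proposition~\ref{cor:r} from the symmetric function identity of Theorem~\ref{thm:main} by applying the Frobenius characteristic map and then evaluating the resulting virtual character at a permutation of cycle type $r^{(n+k)/r}$, using Lemma~\ref{prop:ind} to handle the induction. Concretely, I would start from Equation~\eqref{eq:ch}, which is the character-theoretic translation of \eqref{eq:main}:
\[
\chi^{(n,\la)}=\sum_{j=0}^t (-1)^j \left(\chi^{n+j}\otimes \chi^{\la/(1^j)}\right)\uparrow_{S_{n+j}\times S_{k-j}}^{S_n}.
\]
This holds as an identity of $S_{n+k}$-characters, so in particular it may be evaluated at \emph{any} conjugacy class, not just the identity. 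The strategy is simply to evaluate both sides at the class $r^{(n+k)/r}$ instead of at the identity, and to read off what Lemma~\ref{prop:ind} gives for each induced character in the sum.

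The key computational step is the evaluation of each summand $\left(\chi^{n+j}\otimes \chi^{\la/(1^j)}\right)\uparrow(r^{(n+k)/r})$ via Lemma~\ref{prop:ind}. Here the ambient class $\alpha=r^{(n+k)/r}$ has exactly one nonzero exponent, $a_r=(n+k)/r$, with all other $a_i=0$. The sum in Lemma~\ref{prop:ind} therefore collapses dramatically: the only admissible subpartition $(1^{b_1},\dots,n^{b_n})\vdash (n+j)$ with $0\le b_i\le a_i$ must have $b_i=0$ for $i\ne r$ and $b_r$ free in the range $0\le b_r\le (n+k)/r$. But a partition of $n+j$ consisting solely of parts of size $r$ exists only when $r\mid (n+j)$, and then $b_r=(n+j)/r$ is forced. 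The single surviving term contributes the binomial coefficient $\binom{(n+k)/r}{(n+j)/r}$ times $\chi^{n+j}(r^{(n+j)/r})\,\chi^{\la/(1^j)}(r^{(k-j)/r})$. Since $\chi^{n+j}$ is the trivial character of $S_{n+j}$, its value is $1$, and $\binom{(n+k)/r}{(n+j)/r}=\binom{(n+k)/r}{(k-j)/r}$ by symmetry of the binomial. After substituting and noting that the divisibility constraint $r\mid(n+j)$ is equivalent (given $r\mid(n+k)$) to $r\mid(k-j)$, this reproduces \eqref{eq:r} exactly.

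I expect the main obstacle to be bookkeeping rather than any deep difficulty: one must carefully verify that the index $j$ ranges correctly, that the divisibility conditions $r\mid(n+k)$, $r\mid(n+j)$, and $r\mid(k-j)$ are mutually consistent, and that the trivial-character value and binomial identity line up to give the stated summand. One should also confirm that the terms with $r\nmid(k-j)$ genuinely vanish (because no partition of $n+j$ into parts of size $r$ exists), so the sum restricts to the stated range. For the final ``polynomiality in $n$'' assertion, I would observe that for fixed $\la$, $r$, and $j$ the factor $\chi^{\la/(1^j)}_{r^{(k-j)/r}}$ is a constant independent of $n$, while $\binom{(n+k)/r}{(k-j)/r}$ is a polynomial in $(n+k)/r$ of degree $(k-j)/r$, hence a polynomial in $n$; a finite sum of polynomials in $n$ is again a polynomial in $n$, completing the argument.
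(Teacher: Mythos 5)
Your proposal is correct and follows essentially the same route as the paper: evaluating the character identity~\eqref{eq:ch} at a permutation of cycle type $r^{\frac{n+k}{r}}$ and applying Lemma~\ref{prop:ind}, whose sum collapses to a single term since $a_i=0$ for $i\ne r$, with the same binomial symmetry $\binom{(n+k)/r}{(n+j)/r}=\binom{(n+k)/r}{(k-j)/r}$ and the same polynomiality argument. In fact, you spell out the collapse of the sum in Lemma~\ref{prop:ind} (the forcing of $b_r=(n+j)/r$, the equivalence of $r\mid(n+j)$ and $r\mid(k-j)$, and the triviality of $\chi^{(n+j)}$) in more detail than the paper does.
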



\begin{proof}
    Let $\pi\in S_{n+k}$ be a   permutation  of cycle type ${r^{\frac{n+k}{r}}}$.  
    By Equation~\eqref{eq:ch},  
    \[
\chi^{(n,\la)}(\pi)=\sum\limits_{j=0}^t (-1)^j \left(\chi^{n+j}\otimes \chi^{\la/(1^j)}\right)\uparrow_{S_{n+j}\times S_{k-j}}^{S_n}(\pi).
\]
Letting $\phi=\chi^{(n+j})$, $\psi=\chi^{\la/(1^j)}$ and 
$\alpha=(r^{\frac{n+k}{r}})$ in Lemma~\ref{prop:ind},  the RHS is equal to 
\[
\sum\limits_{j=0}^t (-1)^j \binom{\frac{n+k}{r}}{\frac{k-j}{r}} \chi^{\la/(1^j)}_{r^{\frac{k-j}{r}}},  
\]
where the sum runs over $j$ with $r|(k-j)$, completing the proof of Equation~\eqref{eq:r}.

Finally, 
the binomial coefficients 
$\binom{\frac{n+k}{r}}{\frac{k-j}{r}}$ are polynomials in $n$. 
The character values in the sum depend on fixed parameters only,   
and the number of summands is bounded, completing the proof that the RHS of Equation~\eqref{eq:r} is a polynomial in $n$. 
\end{proof}



\smallskip


Theorem~\ref{thm:main} further implies another generalization of Rasala's result.

\smallskip 



For $\nu=(1^{a_1},2^{a_2},\dots,m^{a_m})\vdash m$ denote $(\nu,1^d):=(1^{a_1+d},2^{a_2},\dots,m^{a_m})\vdash m+d$. 

\begin{proposition}
For every fixed $\la\vdash k$ and $\nu\vdash m$,  
the character value 
\[
\chi^{(n,\la)}_{(\nu,1^{n+k-m})}
\]
is a polynomial in $n$.
\end{proposition}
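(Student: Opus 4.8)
The plan is to apply the character identity~\eqref{eq:ch} and evaluate both sides at the cycle type $\alpha=(\nu,1^{n+k-m})\vdash n+k$ by means of Lemma~\ref{prop:ind}. Writing $\nu=(1^{a_1},2^{a_2},\dots,m^{a_m})$, the multiplicities of $\alpha$ are $a_1+(n+k-m)$ for the part $1$ and $a_i$ for each part $i\ge 2$. First I would record the key simplification: the character $\chi^{n+j}$ occurring in~\eqref{eq:ch} is indexed by the one-row partition $(n+j)$, hence is the trivial character of $S_{n+j}$ and evaluates to $1$ on every conjugacy class. Consequently, applying Lemma~\ref{prop:ind} with $\psi=\chi^{n+j}$ (the $S_{n+j}$-factor) and $\phi=\chi^{\la/(1^j)}$ (the $S_{k-j}$-factor), every occurrence of $\psi$ contributes the factor $1$, so each summand is governed solely by the complementary cycle type fed into the fixed skew character $\chi^{\la/(1^j)}$.

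Next I would reindex the resulting sum by this complementary cycle type $\gamma=(1^{c_1},2^{c_2},\dots)\vdash k-j$, since its complement inside $\alpha$ is precisely the (irrelevant) cycle type on which $\chi^{n+j}$ is evaluated. As $|\gamma|=k-j\le k$ is bounded, only finitely many $\gamma$ can occur, and the product of binomial coefficients in Lemma~\ref{prop:ind} collapses to
\[
\binom{a_1+n+k-m}{c_1}\ \prod_{i\ge 2}\binom{a_i}{c_i}.
\]
Here the factors with $i\ge 2$, together with the value $\chi^{\la/(1^j)}(\gamma)$, are constants independent of $n$, while $c_1\le k-j\le k$ is bounded; hence the single remaining factor $\binom{a_1+n+k-m}{c_1}$ is a polynomial in $n$ of degree $c_1$. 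Thus, for $n$ large enough that every admissible $\gamma$ satisfies $c_1\le a_1+n+k-m$, so that the index set of summands stabilizes, each summand is a fixed constant times a polynomial in $n$.

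Finally, summing over $j$ from $0$ to $t$ and over the finitely many admissible $\gamma$, I would conclude that $\chi^{(n,\la)}_{(\nu,1^{n+k-m})}$ agrees, for all large $n$, with a finite sum of polynomials in $n$, hence is a polynomial in $n$; this parallels the bounded-summand reasoning used in the proof of Proposition~\ref{cor:r}. The hard part will be the collapse step: one must recognize that $\chi^{n+j}$ is trivial and then observe that the only multiplicity of $\alpha$ growing with $n$ is that of the fixed points, so that all $n$-dependence is concentrated in the single binomial coefficient $\binom{a_1+n+k-m}{c_1}$ with bounded upper index. Verifying that the set of admissible $\gamma$ is eventually independent of $n$ is the one technical point requiring care.
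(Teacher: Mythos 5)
Your proposal is correct and takes essentially the same route as the paper: both evaluate the character identity~\eqref{eq:ch} at the class $(\nu,1^{n+k-m})$ via Lemma~\ref{prop:ind}, use that $\chi^{(n+j)}$ is the trivial character (the paper does this implicitly by omitting its values), and isolate all $n$-dependence in the single binomial coefficient $\binom{n+k-m+a_1}{b_1}$ with bounded index $b_1\le k-j$ and boundedly many summands. Your additional check that the set of admissible complementary cycle types stabilizes for large $n$ is a harmless refinement of a point the paper handles with the bound $b_1\le\min\{k-j,\,n+k-m\}$.
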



\begin{proof}
By Theorem~\ref{thm:main} together with 
Lemma~\ref{prop:ind}, for any $\nu=(1^{a_1},2^{a_2},\dots,m^{a_m})$ 
\[
\chi^{(n,\la)}_{(\nu,1^{n+k-m})}= 
\sum\limits_{j=0}^t (-1)^j
\left(\chi^{\la/(1^j)}\otimes \chi^{(n+j)}\right)\uparrow_{S_{k-j}\times S_{n+j}}^{S_{n+k}}(1^{n+k-m+a_1},2^{a_2},\dots,m^{a_m})
\]
\[
=
\sum\limits_{j=0}^t (-1)^j
\sum\limits_{0\le b_1\le 
\min\{k-j,n+k-m\}} \binom{n+k-m+a_1}{b_1}
\sum\limits_{(2^{b_2},\dots,m^{b_m})\vdash k-j-b_1\atop \forall i>1\ 0\le b_i\le a_i} 
\prod\limits_{i=2}^m \binom{a_i}{b_i}
\chi^{\la/(1^j)}{(1^{b_1},\dots,m^{b_m})}.  
\]
Clearly, 
the binomial coefficients $\binom{n+k-m+a_1}{b_1}$ 
are polynomials in $n$ for every $b_1\le k-j$,
 since $m$  and $k$ are fixed. 
All other binomial coefficients  and character values  
in the formula depend on fixed parameters only and the number of the summands is bounded. 
Proof is completed. 
\end{proof}



\begin{question} 
Find explicit nice expansions for such (and similar) polynomials of character values.   
\end{question}

This 
problem was recently studied by Moshayov and Zemel~\cite{MZ}.

\end{document}